\newcommand{\E}{\ensuremath{\mathbb{E}}}
\newcommand{\R}{\ensuremath{\mathbb R}}
\newcommand{\Sym}{\ensuremath{\mathrm{Sym}}}
\newcommand{\tr}{\ensuremath{\mathrm{tr}}}
\newcommand{\grad}{\ensuremath{\mathrm{grad}}}
\renewcommand{\P}{\ensuremath{\mathsf{P}}}
\newcommand{\smooth}{\ensuremath{\mathcal C^\infty}}
\newtheorem{defn}{Definition}
\newtheorem{thm}{Theorem}
\newtheorem{lem}{Lemma}
\newtheorem{cor}{Corollary}
\newtheorem{rmk}{Remark}
\newtheorem*{ex*}{Example}
\newtheorem*{infClaim*}{Informal Claim}
\title{Manifolds with a commutative and associative product structure that encodes superintegrable Hamiltonian systems}
\author{\textsc{Andreas Vollmer}\\
\emph{\small Universit\"at Hamburg, Fachbereich Mathematik,}\\[-.12cm]
\emph{\small Bundesstra{\ss}e 55, 20146 Hamburg, Germany}
\\[.2cm]
\texttt{\small andreas.vollmer@uni-hamburg.de}
}
\date{\today}
\begin{document}

\maketitle

\begin{abstract}
	We show that two natural and a priori unrelated structures encapsulate the same data, namely certain commutative and associative product structures and a class of superintegrable Hamiltonian systems.
	More precisely, consider a Euclidean space of dimension at least three, equipped with a commutative and associative product structure that satisfies the conditions of a Manin-Frobenius manifold, plus one additional compatibility condition.
	We prove that such a product structure encapsulates precisely the conditions of a so-called abundant structure. Such a structure provides the data needed to construct a family of second-order (maximally) superintegrable Hamiltonian systems of second order. We prove that all abundant superintegrable Hamiltonian systems on Euclidean space of dimension at least three arise in this way.
	As an example, we present the Smorodinski-Winternitz Hamiltonian system.   
\end{abstract}

\medskip

\noindent\textsc{Keywords:} Hamiltonian mechanics, second-order superintegrable systems, Witten-Dijkgraaf-Verlinde-Verlinde Equation, Frobenius manifolds
\smallskip

\noindent{\textsc{MSC2020:}
	37J39; % Relations of finite-dimensional Hamiltonian and Lagrangian systems with topology, geometry and differential geometry (symplectic geometry, Poisson geometry, etc.
	53D45, % Gromov-Witten invariants, quantum cohomology, Frobenius manifolds
	70G45, % Differential geometric methods (tensors, connections, symplectic, Poisson, contact, Riemannian, nonholonomic, etc.) for problems in mechanic
%	14N35  % Gromov-Witten invariants, quantum cohomology, Gopakumar-Vafa invariants, Donaldson-Thomas invariants (algebro-geometric aspects)
	37J35, % Completely integrable finite-dimensional Hamiltonian systems, integration methods, integrability tests
	70H33. % Symmetries and conservation laws, reverse symmetries, invariant manifolds and their bifurcations, reduction for problems in Hamiltonian and Lagrangian mechanics

	\section{Introduction}
	
	Let $(\E,g)$ be a flat Riemannian manifold. We consider a commutative and associative product structure $\star:T\E\times T\E\to T\E$, which can be encoded in a $(1,2)$-tensor field $\hat P\in\Gamma(\Sym^2(T^*\E)\otimes T\E)$, $\hat P(X,Y):=X\star Y$ for $X,Y\in\mathfrak X(M)$, satisfying the associativity equation
	\begin{equation}\label{eq:associativity}
		\hat P(\hat P(X,Y),Z) = \hat P(X,\hat P(Y,Z))\,.
	\end{equation}
	This equation is often referred to as the \emph{Witten-Dijkgraaf-Verlinde-Verlinde Equation}, cf.~\cite{Witten1991,DVV1991}.
	For later use we also introduce
	\begin{equation}
		P(X,Y,Z)=g(\hat P(X,Y),Z)\,.
	\end{equation}
	In the present paper, we are specifically interested in commutative and associative product structures that in addition satisfy the following two axioms:
	\begin{enumerate}[label=(P\arabic*)]
		\item
		\emph{Compatibility of the metric and the product structure:}
		\begin{equation}\label{eq:product.comp.g}
			g(X\star Y, Z)=g(X,Y\star Z)\,,
		\end{equation}
		where $X,Y,Z\in\mathfrak X(\E)$. This implies that the tensor $P$ is totally symmetric.
		\item
		\emph{Compatibility with the Levi-Civita connection:}
		\begin{equation}\label{eq:product.comp.nab}
			\nabla_Z\hat P(X,Y) = \hat P(Z\star X,Y)
		\end{equation}
		where $X,Y,Z\in\mathfrak X(M)$, $\alpha\in\Omega^1(\E)$.
	\end{enumerate}
	
	\begin{rmk}\label{rmk:potentiality}
		The condition~\eqref{eq:product.comp.nab} ensures that the product $\star$ satisfies the \emph{potentiality} property, cf.\ \cite{MM1997,Manin1996,Manin1999}, see also \cite{Dubrovin1996}. According to~\cite{Hertling_moduli}, this property holds, if the $(1,3)$-tensor field $\nabla\hat P$ is symmetric (in its lower indices).
		Given the flatness of $g$ and the associativity of $\star$, it guarantees that there is a smooth function $\Phi\in\smooth(\E)$ such that
		\begin{equation}\label{eq:product.potentiality}
			P=\nabla^3\Phi\,.
		\end{equation}
		Indeed,
		\begin{align*}
			\nabla_Z\hat P(X,Y)-\nabla_Y\hat P(X,Z)
			&= \hat P(Z\star X,Y)-\hat P(Y\star X,Z)
			\\
			&= (Z\star X)\star Y-(Y\star X)\star Z
			= 0\,,
		\end{align*}
		due to the associativity and commutativity of $\star$.
		We introduce
		$P(X,Y,Z)=g(\hat P(X,Y),Z)$,
		where $X,Y,Z\in\mathfrak X(M)$.
		It follows that $P$ is a Codazzi tensor for~$g$, and therefore, cf.~\cite{Ferus,KSV2023}, we have that
		\begin{equation*}
			P(X,Y,Z)=\nabla^3\Phi(X,Y,Z)
		\end{equation*}
		for some $\Phi\in\smooth(\E)$, since $g$ is flat.
		In particular, if $X,Y,Z$ are flat, i.e.\ geodesic with respect to $g$, then $P(X,Y,Z)=X(Y(Z(\Phi)))$.
	\end{rmk}
	
	\begin{rmk}\label{rmk:hessian}
		For a tensor field $\hat P$ describing a product structure as above, define the affine connection $\nabla^{\hat P}=\nabla+\hat P$. Then $\nabla^{\hat P}$ is flat.
		Indeed, the curvature tensor $R^{\hat P}$ of $\nabla^{\hat P}$ satisfies
		\begin{align*}
			R^{\hat P}(X,Y)(Z) = R^\nabla(X,Y)Z
			&+ \nabla_Z\hat P(X,Y)-\nabla_Y\hat P(X,Z) \\
			&+ \hat P(\hat P(X,Y),Z)-\hat P(\hat P(X,Z),Y).
		\end{align*}
		The last two terms on the right hand side cancel due to the associativity of $\star$. The first term on the right hand side vanishes due to the flatness of $\nabla$. The remaining two terms on the right hand side then cancel because of~\eqref{eq:product.comp.nab} and~\eqref{eq:associativity}. The latter equation in combination with~\eqref{eq:associativity} also shows that $\nabla P$ is totally symmetric.
		We conclude with the observation that, by an analogous reasoning, one also obtains that $\nabla^{-\hat P}=\nabla-\hat P$ is flat. Since both $\nabla^{\hat P}$ and $\nabla^{-\hat P}$ are also torsion-free, $(M,g,\nabla^{\hat P},\nabla^{-\hat P})$ defines a statistical manifold and, more specifically, a Hessian structure \cite{Noguchi,AV2024}.
	\end{rmk}

	In \cite{MM1997}, product structures that are commutative and satisfy~\eqref{eq:product.comp.g} are called pre-Frobenius manifolds (recall that we work on $(\mathbb E,g)$, which is a flat Rie\-mann\-ian manifold). \emph{Frobenius manifolds} are then defined as pre-Frobenius manifolds that are associative, i.e.\ they satisfy the Witten-Dijkgraaf-Verlinde-Verlinde equation, and that have the potentiality property~\eqref{eq:product.potentiality}, cf.~\cite{Manin1996,MM1997,Manin1999}. To avoid confusion with other definitions of Frobenius manifolds, we shall use the name \emph{Manin-Frobenius manifold} in this paper.
	
	\begin{defn}\label{defn:Manin-Frobenius}
		A \emph{Manin-Frobenius manifold} is an associative pre-Frobenius manifold $(\E,g,\star)$ that additionally satisfies the potentiality property
		\[
			P=\nabla^3\Phi
		\]
		for a some function $\Phi$, where $P\in\Gamma(\mathrm{Sym}^3(T^*M)\otimes TM)$ with $P(X,Y,Z)=g(X\star Y,Z)$, and where $\nabla$ is the Levi-Civita connection of $g$.
	\end{defn}
	
	As mentioned, there are diverse definitions of Frobenius manifolds found in the literature, e.g.\ \cite{Dubrovin1996,Hertling_moduli}, in which the existence of a unit and Euler vector field is often additionally required.
	Frobenius manifolds play a significant role in topological and quantum field theories, e.g.\ \cite{Abrams,Dijkgraaf}. They are related, for instance, to Gromov-Witten invariants, moduli spaces \cite{Hertling_moduli}, singularity theory, quantum cohomology \cite{HM_cohom}, Painlev\'e equations \cite{Romano2014,AL2019}, Hamiltonian operators of hydrodynamic type, bi-Hamiltonian structures \cite{DZ1998,LV2024} and Nijenhuis geometry \cite{BKM2023}. The Witten-Dijkgraaf-Verlinde-Verlinde equation~\eqref{eq:associativity} has also been linked to Lenard complexes \cite{Magri2015,Magri2016}.

	The purpose of this paper is to relate Manin-Frobenius manifolds that satisfy~\eqref{eq:product.comp.nab}	to a special class of so-called superintegrable Hamiltonian systems of second order.
	We therefore now provide a brief review of the latter concept, which is a classical subject of study in mathematical physics. 
	Consider a Riemannian manifold $(M,g)$ and a smooth function $V\in\mathcal C^\infty(M)$. Due to the tautological $1$-form, the cotangent space $T^*M$ carries a natural symplectic structure $\omega\in\Omega^2(M)$. We assume that $(x,p)$ are canonical Darboux coordinates with respect to this symplectic structure, and then call the function $H:T^*M\to\R$,
	\begin{equation}
		H(x,p)=g^{-1}_x(p,p)+V(x)\,,
	\end{equation}
	a \emph{natural Hamiltonian} on $(M,g)$.
	The symplectic structure allows one to associate to $H$ its Hamiltonian vector field $X_H\in\mathfrak X(T^*M)$ via $\omega(X_H,-)=dH$.
	The solution curves $\gamma$ of $\dot\gamma=X_H\circ\gamma$ are called the \emph{Hamiltonian trajectories} of $H$.
	A function $F:T^*M\to\mathbb{R}$ is said to be a \emph{first integral of the motion} (also called \emph{constant of the motion}) for $H$, if it is constant along Hamiltonian trajectories. Equivalently, it satisfies the equation
	\begin{equation*}\label{eq:integral}
		\omega(X_H,X_F) = 0\,.
	\end{equation*}
	If $F$ is a quadratic polynomial in the momenta $p$, i.e.\ the canonical fibre coordinates on $T^*M$, then we say that it is a first integral of \emph{second order}.
	If $F(x,p)=\sum_{i=1}^n\sum_{j=1}^nK^{ij}(x)p_ip_j+W(x)$ is a first integral of second order for~$H$, then it is well known that $K=\sum_{i=1}^n\sum_{j=1}^nK_{ij}\,dx^ i\odot dx^j\,\in\Gamma(\Sym^2(T^*M))$, with $K_{ij}=\sum_{a=1}^n\sum_{b=1}^ng_{ia}g_{jb}K^{ab}$, is a Killing tensor field for the metric $g$, i.e.\ that
	\begin{equation}\label{eq:killing}
		\nabla_XK(X,X)=0\,,
	\end{equation}
	for any $X\in\mathfrak X(M)$.
	The condition~\eqref{eq:integral} is then equivalent to~\eqref{eq:killing} and the condition
	\begin{equation}\label{eq:pre-BD}
		dW=\hat K(dV)\,,
	\end{equation}
	where $\hat K\in\Gamma(T^*M\otimes TM)$ is the endomorphism associated to $K$ by virtue of~$g$. Applying the differential to~\eqref{eq:pre-BD}, we obtain the so-called \emph{Bertrand-Darboux condition} \cite{bertrand_1857,darboux_1901}
	\begin{equation}\label{eq:BD}
		d(\hat K(dV)) = 0\,.
	\end{equation}
	
	A \emph{(maximally) superintegrable (Hamiltonian) system (of second order)} is a natural Hamiltonian $H$ together with $2n-2$ first integrals $F_k$ of second order, $1\leq k\leq 2n-2$, such that $(H,F_1,\dots,F_{2n-2})$ are functionally independent.
	For brevity, we will use the shorthand \emph{superintegrable system}, dropping the other adjectives, since we only consider maximally superintegrable Hamiltonian systems of second order here.
	
	A superintegrable system is called \emph{abundant}, cf.~\cite{KKM-3,KSV2023,KSV2024,KSV2024_bauhaus}, if there is a linear space $\mathcal V\subset\smooth(M)$ of functions and a linear space $\mathcal K\subset\Gamma(\Sym^2(T^*M))$ of tensor fields of rank two such that
	\begin{enumerate}[label=(\roman*)]
		\item any element of $\mathcal K$ is a Killing tensor field,
		%, i.e.\ it satisfies $\nabla_XK(X,X)$ for any $X\in\mathfrak X(M)$
		\item Equation~\eqref{eq:BD} holds for any $V\in\mathcal V$ and $K\in\mathcal K$,
		\item $\dim(\mathcal K)=\frac12n(n+1)$ and $g\in\mathcal K$,
		\item $\dim(\mathcal V)=n+2$.
	\end{enumerate}
	Here, the space $\mathcal V$ is implicitly required to contain the potential of the superintegrable Hamiltonian, and $\mathcal K$ is required to contain the Killing tensors associated to its integrals of the motion $F_k$.
	
	The study of superintegrable systems of second order is an ongoing subject of investigation in Mathematical Physics, e.g.~\cite{Evans1990_winternitz,KKM2018,GKL2024,CHMZ2021}. Such systems are related to systems of separation coordinates, e.g.~\cite{KKM2018}, which have been related to certain moduli spaces and to Stasheff polytopes~\cite{SV2015,Schoebel}, as well as to line arrangements~\cite{Kress&Schoebel}. 
	They have also been related to hypergeometric orthogonal polynomials organised in the Askey scheme~\cite{PKM2013,PKM2011,Capel&Kress&Post}. Abundant systems are classified in dimensions two and three \cite{Evans1990,KKM-1,KKM-2,KKM-3,KKM-4,Capel&Kress}. Based on the classical Maupertuis-Jacobi principle \cite{Maupertuis_1750,jacobi}, St\"ackel transformations and coupling constant metamorphosis have been studied as conformal rescalings of second-order superintegrable systems \cite{Post10,KSV2024,Kress07,KKM-2,KKM-4,Blaszak2012,Blaszak2017}.
	
	From now on, we will assume that $M$ is simply connected and oriented. Moreover, from now on we restrict ourselves to manifolds of dimension $n\geq3$.
	The goal of this paper is to show the following correspondence:
	
	\begin{infClaim*}
		On a simply connected, oriented and flat manifold of dimension $n\geq3$ a commutative and associative product structure satisfying~\eqref{eq:product.comp.g} and~\eqref{eq:product.comp.nab} is a source of maximally superintegrable Hamiltonian systems and, more precisely, there is a correspondence between abundant superintegrable Hamiltonian systems and such product structures.
	\end{infClaim*}
	
	\noindent This informal claim will be made precise below, in Theorems~\ref{thm:product2abundant} and~\ref{thm:abundant2product}.
	These two theorems form the main result of this paper.
	Theorem~\ref{thm:product2abundant} is proved in Section~\ref{sec:abundant.structure}. It establishes that a Manin-Frobenius manifold that satisfies \eqref{eq:product.comp.nab} gives rise to a so-called \emph{abundant structure}  (introduced in Definition~\ref{defn:abundant.structure} below).
	The latter structure naturally underlies an important subclass of superintegrable Hamiltonian systems \cite{KKM-3,Capel_phdthesis,KSV2023,Capel&Kress}, and it is therefore a rich source of superintegrable systems.
	The core ingredients of abundant structures are a symmetric and trace-free tensor field $S\in\Gamma(\Sym^3_\circ(T^*M))$ and a smooth function $t\in\mathcal C^\infty(M)$, which satisfy the structural equations for abundant superintegrable systems determined in \cite{KSV2023}.
	
	In Section~\ref{sec:all.arise}, we proceed to the converse direction. In Theorem~\ref{thm:abundant2product}, we show that all abundant structures on a flat Riemannian manifold of dimension $n\geq3$ give rise to a commutative and associative product structure that satisfies~\eqref{eq:product.comp.g} and~\eqref{eq:product.comp.nab}.
	This establishes the 1-to-1 correspondence between Manin-Frobenius manifolds that satisfy \eqref{eq:product.comp.nab} and abundant structures. 
	
	A discussion of the correspondence is given in Section~\ref{sec:discussion}.
	In Section \ref{sec:abundant system}, we first lay out a brief review of some results from~\cite{KSV2023}. These results then allow us to describe precisely how to obtain superintegrable systems from the abundant structures mentioned earlier, and vice versa. This extends the correspondence result, see Corollary~\ref{cor:superintegrable}: Manin-Frobenius manifolds that satisfy \eqref{eq:product.comp.nab} correspond to abundant superintegrable systems.
	In Section \ref{sec:interpretation}, we provide an interpretation of the condition~\eqref{eq:product.comp.nab}, as a compatibility condition between the Manin-Frobenius structure and the Hessian structure mentioned in Remark~\ref{rmk:hessian}.
	We conclude the paper, in Section~\ref{sec:example}, with a short discussion of the famous Smorodinski-Winternitz system as an example of the correspondence put forth in this paper. We find that it corresponds to a Manin-Frobenius manifold with unit vector field.
	%in the sense of \cite{Manin1999,Manin1996,MM1997}, cf.\ also~\cite{Hertling_moduli}.

	\section{Associated abundant structure}\label{sec:abundant.structure}
	
	The purpose of this section is to show that a flat Riemannian manifold $(\E,g)$ with a product $\star$ as above has an associated abundant structure. By this we mean that $(M,g)$ admits a tensor field $S\in\Gamma(\Sym^3_\circ(T^*M))$ and a smooth function $t\in\smooth(M)$ that satisfy the structural equations found in~\cite{KSV2023}. More precisely, denote the Schouten tensor of $g$ by
	\[ \P = \frac{1}{n-2}\left( \mathrm{Ric} - \frac{\tr (\mathrm{Ric})}{2n(n-1)}\,g \right), \]
	where $\mathrm{Ric}$ is the Ricci tensor of $g$ and where $\tr$ is the trace with respect to the metric $g$.
	The Kulkarni-Nomizu product of tensor fields $A_1,A_2\in\Gamma(\mathrm{Sym}^2T^*M)$ is defined by
	\begin{align*}
		(A_1\owedge A_2)(X,Y,Z,W)
		&= A_1(X,Z)A_2(Y,W) + A_1(Y,W)A_2(X,Z) \\
		&\quad - A_1(X,W)A_2(Y,Z) - A_1(Y,Z)A_2(X,W)\,,
	\end{align*}
	for $X,Y,Z,W\in\mathfrak X(M)$.
	We furthermore introduce the projector $\Pi_{\Sym^r}$ of a tensor field of rank $r$ onto its totally symmetric part as well as the projector
	$\Pi_{\mathrm{Weyl}}:\Gamma(\Sym^2_\circ(T^*M)\otimes\Sym^2_\circ(T^*M))\to \Gamma(\Sym^2_\circ(\Lambda^2T^*M))$ onto the Weyl symmetric part,
	\begin{equation*}
		\Pi_{\mathrm{Weyl}} A=
		\Psi
		- \left( \psi - \frac{\tr\,\psi}{2n(n-1)}\,g\right)\owedge g\,,
	\end{equation*}
	where we introduce the auxiliary tensor fields
	\begin{align*}
		\Psi(X,Y,Z,W) &= (\Pi_{\mathrm{Riem}} A)(X,Y,Z,W)\,,\\
		\psi(X,Y) &= \frac{1}{n-2}\tr\,\Psi(\cdot,X,\cdot,Y)\,.
	\end{align*}
	Here we have used the usual projector onto the Riemann symmetric part, i.e.
	\begin{align*}
		(\Pi_{\mathrm{Riem}} \Phi)(X,Y,Z,W)
		&= \frac14\Big( A(X,Z,Y,W) - A(X,W,Y,Z)
		\\ & \qquad - A(Y,Z,X,W) + A(Y,W,X,Z) \Big),
	\end{align*}
	for $\Phi\in\Gamma(\Sym^2_\circ(T^*M)\otimes\Sym^2_\circ(T^*M))$.

	\begin{defn}[\cite{CV2024,KSV2023}]\label{defn:abundant.structure}
		On a Riemannian manifold $(M,g)$ of constant sectional curvature $\kappa$ and of dimension $n\geq3$, a pair $(S,t)$, consisting of a tensor field $S\in\Gamma(\Sym^3_\circ(T^*M))$ and a smooth function $t\in\smooth(M)$, is called an abundant structure, if\,\footnote{For convenience, we have taken~\ref{item:abundant.DDt} from \cite{KSV2024}, which generalises the results of~\cite{KSV2023}.}
		\begin{enumerate}[label=(A\arabic*)]
			\item\label{item:abundant.DS}
			the covariant derivative of $S$ satisfies
			\[ \nabla^g_WS(X,Y,Z) = \frac13\, \left( \Pi_{\mathrm{Sym}^3_\circ} \digamma \right)(X,Y,Z,W), \]
			where we introduce the auxiliary tensor field $\digamma \in \Gamma ((T^*M)^{\otimes 3}\otimes T^*M)$,
			\begin{multline}\label{eq:digamma}
				\digamma(X,Y,Z,W) := S(X,W,\hat S(Y,Z))
				+ 3\,S(X,Y,W) Z(t) + S(X,Y,Z)W(t)
				\\
				+ \left(\frac{4}{n-2} \mathscr{S}(Y,Z) -3 S(Y,Z, \mathrm{grad}_g\, t)\right) g(X,W)\,,
			\end{multline}
			with $\hat S\in\Gamma(\Sym^2_\circ(T^*M)\otimes TM)$, $g(\hat S(X,Y),Z)=S(X,Y,Z)$, and the shorthand $\mathscr{S}\in\Gamma(\mathrm{Sym}^2(T^*M))$,
			\[
			\mathscr{S}(X,Y) = \tr(\hat S(X,\hat S(Y,\cdot))\,,
			\]
			where $X,Y,Z,W\in \mathfrak{X}(M)$.
			\item\label{item:abundant.DDt}
			the Hessian of $t$ satisfies
			\[ \nabla^2 t =\frac32\,\kappa
			+\frac13\left( dt^2-\frac12 |\mathrm{grad}\, t|^2 g \right) 
			+\frac{1}{3(n-2)}\left(
			\mathscr{S}+\frac{(n-6)\,|S|^2g}{2(n-1)(n+2)}
			\right)\,, \]
			where $|\cdot|=|\cdot|_g$ is the usual norm defined via total contraction using $g$, e.g.\ $|S|^2=g^{ai}g^{bj}g^{ck}S_{ijk}S_{abc}$ using Einstein's summation convention.
			\item\label{item:abundant.Riem}
			the condition
			\begin{multline*}
				g(\hat B(X,Z),\hat B(Y,W))-g(\hat B(X,W),\hat B(Y,Z))
				\\
				= \kappa\,(g(X,Z)g(Y,W)-g(X,W)g(Y,Z)).
			\end{multline*}
			holds, $X,Y,Z,W\in\mathfrak X(M)$, where $g(\hat B(X,Y),Z):=B(X,Y,Z)$ with $B\in\Gamma(\Sym^3(T^*M))$,
			\[ B = -\frac13\left( S+3\Pi_{\Sym^3}\,g\otimes dt \right)\,. \]
		\end{enumerate}
	\end{defn}
	
	The conditions in Definition~\ref{defn:abundant.structure} were first obtained in \cite{KSV2023} for the particular class of abundant second-order superintegrable systems (that had previously been studied in low dimension, cf.\ \cite{KKM-3,KKM2018,Capel&Kress,Capel_phdthesis}). The conditions were later generalised in \cite{KSV2024}. The term \emph{abundant} was first coined in \cite{KSV2023,KSV2024} for superintegrable Hamiltonian systems, whereas \cite{CV2024} introduces the concept of an abundant structure as a predominantly geometric structure.
	
	For later convenience, let us investigate the condition put forth in \ref{item:abundant.Riem} further, by decomposing it into its trace-free and trace components.
	We first extract from the condition in~\ref{item:abundant.Riem} its trace-free part (which has algebraic Weyl symmetry)
	$$ \Pi_{\mathrm{Weyl}}\ g(\hat B(\cdot,\cdot),\hat B(\cdot,\cdot)) = 0. $$
	Its trace part is then obtained as
	\begin{equation}\label{eq:ricci-part}
		(n-1)\,\kappa\,g(Y,W) = -3(n+2)\,g(\grad(t),\hat B(Y,W))-\mathscr B(Y,W))\,,
	\end{equation}
	where $\grad$ is the gradient with respect to $g$, and where we let
	\[ \mathscr B(X,Y)=\tr(\hat B(X,\hat B(Y,\cdot))\,). \]
	Equation~\eqref{eq:ricci-part} then decomposes further into its trace-free and trace parts, i.e.
	\begin{align*}\label{eq:ricci0-part}
		0 = -3(n+2)\,g(\grad(t),\hat B(Y,W))
		&-9\frac{(n+2)^2}{n}\,g(Y,W)\,|\grad(t)|^2
		\\
		&\qquad -\mathscr B(Y,W)+\frac1n\,g(Y,W)\,|\mathscr B|^2
	\end{align*}
	and
	\begin{equation}
		n(n-1)\,\kappa = 9(n+2)^2\,|\grad(t)|^2-|\mathscr B|^2,
	\end{equation}
	respectively.
	We now compute
	\begin{align*}
		9\mathscr B &= \mathscr S+4\,S(\grad(t),\cdot,\cdot)+(n+6)\,dt\otimes dt+2\,|\grad(t)|^2\,g
		\\
		9\,|B|^2
		&= |S|^2+3(n+2)\,|\grad(t)|^2\,.
	\end{align*}
	Indeed, using index notation and Einstein's summation convention,
	\begin{align*}
		9\mathscr B_{ij}
		&= ( S_{i}^{ab}+t_ig^{ab}+t^ag_i^b+t^bg_i^a )( S_{jab}+t_jg_{ab}+t_ag_{jb}+t_bg_{ja} )
		\\
		&= S_{i}^{ab}S_{jab}+4S_{ija}t^a+(n+6)t_it_j+2\,|\grad t|^2\,g_{ij}\,,
	\end{align*}
	and both identities follow immediately.
	Letting $\kappa=0$ and inserting our findings into~\eqref{eq:ricci-part}, we next find
	$$ 0 = (n-2)S(\grad(t),\cdot,\cdot)-\mathscr S+(n-2)\,dt\otimes dt+n|\grad(t)|^2\,g\,. $$
	Decomposing this into its trace-free and trace parts, we obtain, respectively,
	$$ \left( (n-2)\hat S(dt)+(n-2)dt\otimes dt -\mathscr S \right)_\circ = 0 $$
	and
	\begin{equation}\label{eq:perf.sq}
		|S|^2-(n-1)(n+2)|\grad(t)|^2 = 0.
	\end{equation}
	
	We are now going to show that a Manin-Frobenius manifold $(\E,g,\star)$, subject also to \eqref{eq:product.comp.nab}, induces an abundant structure on $(M=\E,g)$. Since $\E$ is already flat, we only need to verify that the conditions~\ref{item:abundant.DS}, \ref{item:abundant.DDt} and~\ref{item:abundant.Riem} hold.
	\begin{lem}
		Let $(\E,g,\star)$ be a flat Riemannian manifold of dimension $n\geq3$ with an associative and commutative product structure $\star$ satisfying \eqref{eq:product.comp.g} and~\eqref{eq:product.comp.nab} as in the introduction. Then the trace $\tr(\hat P)\in\Omega^1(M)$ of the tensor field~$\hat P$ associated to $\star$ is closed. 
	\end{lem}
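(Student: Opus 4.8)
The statement is essentially a bookkeeping consequence of facts already recorded in the excerpt, so the plan is short. The key input is that $\nabla P$ is \emph{totally symmetric} — this was observed in Remark~\ref{rmk:hessian} as a consequence of~\eqref{eq:product.comp.nab} together with the associativity and commutativity of $\star$ — or, equivalently, that $P=\nabla^3\Phi$ for some $\Phi\in\smooth(\E)$ by Remark~\ref{rmk:potentiality}. Given this, I would prove $d(\tr\hat P)=0$ directly rather than appeal to anything about superintegrability.

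First I would rewrite the $1$-form $\tr\hat P$ intrinsically: since $\hat P$ is obtained from the totally symmetric $P$ by raising an index with $g$, one has $(\tr\hat P)(X)=\tr_g\big(P(X,\cdot,\cdot)\big)$, the metric trace of the symmetric $2$-tensor $P(X,\cdot,\cdot)$. Because $\nabla g=0$, the metric contraction commutes with $\nabla$; combining this with the Leibniz rule $\nabla_Y\big(P(X,\cdot,\cdot)\big)=(\nabla_YP)(X,\cdot,\cdot)+P(\nabla_YX,\cdot,\cdot)$ and the definition of the covariant derivative of a $1$-form, the $P(\nabla_YX,\cdot,\cdot)$ term cancels and one is left with
\[
(\nabla_Y\tr\hat P)(X)=\tr_g\big((\nabla_YP)(X,\cdot,\cdot)\big).
\]

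Next, since $\nabla$ is torsion-free, $d(\tr\hat P)(X,Y)=(\nabla_X\tr\hat P)(Y)-(\nabla_Y\tr\hat P)(X)=\tr_g\big((\nabla_XP)(Y,\cdot,\cdot)-(\nabla_YP)(X,\cdot,\cdot)\big)$. This is the only point at which the hypotheses enter: total symmetry of $\nabla P$ gives $(\nabla_XP)(Y,Z,W)=(\nabla_YP)(X,Z,W)$ for all $Z,W$, so the argument of the trace vanishes identically, hence $d(\tr\hat P)=0$. (One can also see this even more concretely from Remark~\ref{rmk:potentiality}: in flat coordinates $P_{ijk}=\partial_i\partial_j\partial_k\Phi$, so $(\tr\hat P)_i=g^{jk}\partial_i\partial_j\partial_k\Phi=\partial_i(\Delta_g\Phi)$, and $\tr\hat P$ is in fact exact, being $d$ of the Laplacian of the potential.)

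I do not expect a genuine obstacle here: all the content is in the already-established symmetry of $\nabla P$ (equivalently, the potentiality property). The only steps requiring a little care are the two standard facts invoked above — that metric contraction commutes with the Levi-Civita connection, and the torsion-free identity $d\alpha(X,Y)=(\nabla_X\alpha)(Y)-(\nabla_Y\alpha)(X)$ for a $1$-form $\alpha$ — together with keeping track that the extra $P(\nabla_\bullet\bullet,\cdot,\cdot)$ terms cancel when passing from the scalar derivative to the covariant derivative of the $1$-form.
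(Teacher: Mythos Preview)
Your argument is correct. Both your proof and the paper's reduce to showing that $\nabla_Y(\tr\hat P)(X)$ is symmetric in $X,Y$, but they reach this symmetry by slightly different paths. The paper traces the compatibility condition~\eqref{eq:product.comp.nab} directly, obtaining $\nabla_Z(\tr\hat P)(Y)=\mathscr{P}(Y,Z)$ with $\mathscr{P}(X,Y)=\tr\big(\hat P(X,\hat P(Y,\cdot))\big)$, and then notes that $\mathscr{P}$ is symmetric by commutativity and associativity of~$\star$. You instead invoke the total symmetry of $\nabla P$ (equivalently, the potentiality $P=\nabla^3\Phi$) already recorded in Remarks~\ref{rmk:potentiality} and~\ref{rmk:hessian}, which is itself a consequence of~\eqref{eq:product.comp.nab} and associativity; so the underlying content is the same, packaged one level higher. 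Your parenthetical observation that in flat coordinates $(\tr\hat P)_i=\partial_i(\Delta_g\Phi)$, hence $\tr\hat P=d(\Delta_g\Phi)$ is actually \emph{exact}, is a genuine bonus not stated in the paper and gives the shortest route of all.
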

	\begin{proof}
		We consider the condition~\eqref{eq:product.comp.nab}. Bringing all terms to one side and then taking the trace, we have
		\begin{equation}\label{eq:trP}
			0 = \tr\Big(\nabla_Z\hat P(\cdot,Y)-\hat P(\hat P(\cdot,Y),Z)\Big)
			= \nabla_Z\tr(\hat P)(Y)-\mathscr{P}(Y,Z).
		\end{equation}
		Here, we write $\mathscr{P}(X,Y)=\tr(\hat P(X,\hat P(Y,\cdot)))$ and note that $\mathscr{P}\in\Gamma(\Sym^2(T^*\E))$.
		Antisymmetrising~\eqref{eq:trP}, we arrive at the equation
		\begin{equation*}
			d\tr(\hat P) = 0\,,
		\end{equation*}
		i.e.~$\tr(\hat P)$ is closed.
	\end{proof}
	
	Our first main result associates an abundant structure to any product~$\star$. 
	\begin{thm}\label{thm:product2abundant}
		Let $(\E,g,\star)$ be a simply connected, oriented and flat Riemannian manifold of dimension $n\geq3$ with a commutative and associative product structure $\star$ satisfying \eqref{eq:product.comp.g} and~\eqref{eq:product.comp.nab}. Furthermore, let
		\begin{align*}
			S&= -3\mathring{P}\in\Gamma(\Sym^3_\circ(T^*\E))
		\end{align*}
		and let $t\in\smooth(\E)$ such that
		\begin{equation}\label{eq:aux.dt.trP}
			dt = -\frac{3}{n+2}\tr(\hat P).
		\end{equation}
		Then $(S,t)$ defines an abundant structure on $(\E,g)$.
	\end{thm}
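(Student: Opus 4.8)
\smallskip
\noindent\emph{Proof plan.}
Since $\E$ is flat we have $\kappa=0$, so by Definition~\ref{defn:abundant.structure} it remains to verify \ref{item:abundant.DS}, \ref{item:abundant.DDt} and \ref{item:abundant.Riem}. First note that $S=-3\mathring P$ is automatically totally symmetric and trace-free because $P$ is totally symmetric by~\eqref{eq:product.comp.g}, and that a smooth $t$ with $dt=-\tfrac{3}{n+2}\tr(\hat P)$ exists---unique up to an irrelevant additive constant, since all three conditions involve $t$ only through $dt$---by the Lemma above and simple-connectedness of $\E$. The backbone of the argument is the identity $B=P$ (equivalently $\hat B=\hat P$): the trace-free part of $P$ is $\mathring P=P-\tfrac{3}{n+2}\,\Pi_{\Sym^3}(g\otimes\tr\hat P)$, which by~\eqref{eq:aux.dt.trP} equals $P+\Pi_{\Sym^3}(g\otimes dt)$, whence $B=-\tfrac13\big(S+3\,\Pi_{\Sym^3}(g\otimes dt)\big)=-\tfrac13\big(-3\mathring P+3\,\Pi_{\Sym^3}(g\otimes dt)\big)=P$. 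This makes~\ref{item:abundant.Riem} essentially algebraic: with $\hat B=\hat P$ and $\kappa=0$ it reads $g(X\star Z,Y\star W)=g(X\star W,Y\star Z)$, and both sides equal $g(X\star Y,Z\star W)$ by~\eqref{eq:product.comp.g}, \eqref{eq:associativity} and commutativity of $\star$. Once~\ref{item:abundant.Riem} is established, all of its algebraic consequences derived in the discussion preceding the theorem become available, in particular~\eqref{eq:perf.sq} and the relation $\mathscr S=(n-2)\,S(\grad t,\cdot,\cdot)+(n-2)\,dt\otimes dt+n\,|\grad t|^2\,g$.

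Condition~\ref{item:abundant.DDt} should then follow by a short algebraic computation. Differentiating~\eqref{eq:aux.dt.trP} and invoking~\eqref{eq:trP} gives $\nabla^2 t=-\tfrac{3}{n+2}\,\mathscr P$, where $\mathscr P(X,Y)=\tr(\hat P(X,\hat P(Y,\cdot)))$; since $\hat B=\hat P$ one has $\mathscr P=\mathscr B$, so $\mathscr P$ can be written out in $S$ and $t$ by the same expansion that was used in the discussion to extract the structural identities, after which substituting the relation for $\mathscr S$ and~\eqref{eq:perf.sq} puts $-\tfrac{3}{n+2}\,\mathscr P$ into exactly the form prescribed by~\ref{item:abundant.DDt} with $\kappa=0$.

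The substantial part is~\ref{item:abundant.DS}. On the left-hand side, $\nabla_W S=-3\,\nabla_W\mathring P=-3\big(\nabla_W P-\tfrac{3}{n+2}\,\Pi_{\Sym^3}(g\otimes\nabla_W\tr\hat P)\big)$, and here~\eqref{eq:product.comp.nab} gives $\nabla_W P(X,Y,Z)=g(X\star Y\star W,Z)$ while~\eqref{eq:trP} gives $\nabla_W\tr\hat P=\mathscr P(W,\cdot)$. On the right-hand side I would expand $\digamma$ from~\eqref{eq:digamma} through $\hat S=-3\mathring{\hat P}$ and $dt=-\tfrac{3}{n+2}\tr\hat P$; the leading term becomes $S(X,W,\hat S(Y,Z))=9\,g(\mathring{\hat P}(X,W),\mathring{\hat P}(Y,Z))$, and the crucial observation is that $g(\hat P(X,W),\hat P(Y,Z))=g(X\star W,Y\star Z)$ is totally symmetric in $X,Y,Z,W$ by~\eqref{eq:product.comp.g} and~\eqref{eq:associativity}, so that it coincides with $\nabla_W P(X,Y,Z)$, while the remaining terms of $\digamma$ (those built from $\mathscr S$, $S(Y,Z,\grad t)$, $Z(t)$ and $W(t)$) are expanded the same way in $\mathring P$ and $\tr\hat P$. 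Forming $\tfrac13\Pi_{\Sym^3_\circ}\digamma$ and using the algebraic identities coming from~\ref{item:abundant.Riem} to absorb the trace parts, it should agree with $\nabla_W S$. This last identity is the main obstacle: a long but mechanical tensor computation whose success rests entirely on~\eqref{eq:associativity}, which forces every ``$\hat P$ composed with $\hat P$'' expression to become a totally symmetric tensor and thereby makes the trace-free symmetric projections on the two sides directly comparable; the delicate point is the careful bookkeeping of the trace ($g$-)terms, where the function $t$, the dimension-dependent coefficients, and the consequences of~\ref{item:abundant.Riem} all come into play.
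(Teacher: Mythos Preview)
Your proposal is correct and follows essentially the same route as the paper: verify \ref{item:abundant.Riem}, \ref{item:abundant.DDt}, \ref{item:abundant.DS} by direct computation from~\eqref{eq:product.comp.g}, \eqref{eq:associativity} and~\eqref{eq:product.comp.nab}. Your explicit identification $B=P$ (equivalently $\hat B=\hat P$) and your choice to establish~\ref{item:abundant.Riem} first---so that~\eqref{eq:perf.sq} and the other trace identities are legitimately available when checking~\ref{item:abundant.DDt} and~\ref{item:abundant.DS}---make the logical flow cleaner than in the paper, which invokes~\eqref{eq:perf.sq} while deferring the verification of~\ref{item:abundant.Riem} to the end; your argument for~\ref{item:abundant.Riem} via $g(X\star Z,Y\star W)=g(X\star Y,Z\star W)=g(X\star W,Y\star Z)$ is also more direct than the paper's appeal to $\Pi_{\mathrm{Weyl}}\mathfrak P=0$.
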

	\begin{proof}
		We consider the condition~\eqref{eq:product.comp.nab}, bringing all terms to one side and then taking the trace, arriving again at~\eqref{eq:trP}.
		We let $t$ be defined by~\eqref{eq:aux.dt.trP}, noting that the integration constant is irrelevant. Using~\eqref{eq:product.comp.nab}, it follows that
		\begin{equation*}
			\nabla^2t
			= \frac{1}{3(n-2)}\mathscr S+\frac13\left( dt\otimes dt-\frac{2}{n-2}\,|\grad(t)|^2\,g \right)\,.
		\end{equation*}
		Indeed, using index notation and Einstein's summation convention,
		\begin{align*}
			\nabla^2_{ij}t
			&= -\frac{3}{n+2}(\nabla_j\tr(\hat P))_i
			= -\frac{3}{n+2}\mathscr{P}_{ij}
			\\
			&= \frac{n+2}{3}\left( S_{i}^{ab}S_{jab}+4S_{ija}t^a+(n+6)t_it_j
			+2\,|\grad(t)|^2\,g_{ij} \right)
			\\
			&= \frac{1}{3(n-2)}\left( \mathscr S_{ij}+(n-2)t_it_j-2\,|\grad(t)|^2\,g_{ij} \right)
			\\
			&= \frac{1}{3(n-2)}\mathscr S_{ij}+\frac13\left( t_it_j-\frac{2}{n-2}\,|\grad(t)|^2\,g_{ij} \right)
		\end{align*}
		Using~\eqref{eq:perf.sq}, the equivalence with~\ref{item:abundant.DDt} is immediately verified.
		Next, we let $S=\mathring{P}\in\Gamma(\Sym^3_\circ(T^*M))$. A direct computation then shows ($X,Y,Z,W\in\mathfrak X(M)$)
		\begin{align*}
			\nabla_WS(X,Y,Z)
			&= -3\nabla_WP(X,Y,Z)=-3\,P(\hat P(W,X),Y,Z)
			\\
			&= \frac13\,(\Pi_{\Sym^3_\circ}\digamma)(X,Y,Z,W)\,,
		\end{align*}
		proving~\ref{item:abundant.DS}.
		It remains to verify that \ref{item:abundant.Riem} holds. Indeed, consider~\eqref{eq:associativity}, i.e.\ the associativity of $\star$. It follows that
		\[
		\Pi_\text{Weyl}\ \mathfrak{P} = 0\,.
		\]
		where $\mathfrak{P}(X,Y,Z,W)=g(\hat P(X,Y),\hat P(Z,W))$.
		This completes the proof.
	\end{proof}

	\section{All flat abundant structures arise in this way}\label{sec:all.arise}
	
	We now consider the converse problem to the one addressed in Theorem~\ref{thm:product2abundant}.
	
	\begin{thm}\label{thm:abundant2product}
		Let $(M,g)$ be a (simply connected) flat Riemannian manifold with abundant structure $(S,t)$, and of dimension $n\geq3$. Define
		\begin{equation*}
			P = -\frac13\,S-\Pi_{\Sym^3}\,g\otimes dt
		\end{equation*}
		where $\Pi_{\Sym^3}$ is the projection onto the totally symmetric part.
		Then the product given by
		\[ X\star Y:=\hat P(X,Y)\in\mathfrak X(M)\,, \]
		with $g(\hat P(X,Y),Z):=P(X,Y,Z)$ for $X,Y,Z\in\mathfrak X(M)$, is commutative and associative and satisfies the conditions~\eqref{eq:product.comp.g} and~\eqref{eq:product.comp.nab}.
	\end{thm}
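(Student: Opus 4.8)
The plan rests on one observation: the tensor $P$ in the statement coincides with the tensor $B$ of axiom~\ref{item:abundant.Riem}, since
$B=-\tfrac13\bigl(S+3\,\Pi_{\Sym^3}\,g\otimes dt\bigr)=-\tfrac13 S-\Pi_{\Sym^3}(g\otimes dt)=P$,
hence $\hat P=\hat B$. Consequently the trace-free part of $P$ is $\mathring P=-\tfrac13 S$ and, as $S$ is trace-free, $\tr(\hat P)=-\tfrac{n+2}{3}\,dt$, in accordance with~\eqref{eq:aux.dt.trP}. Both $S$ and $\Pi_{\Sym^3}(g\otimes dt)$ are totally symmetric, so $P$ is totally symmetric; therefore $g(X\star Y,Z)=P(X,Y,Z)$ is symmetric in all three arguments, which yields at once the commutativity of $\star$ and condition~\eqref{eq:product.comp.g}. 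From here on I may use~\eqref{eq:product.comp.g}, and only associativity and~\eqref{eq:product.comp.nab} remain.

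For associativity, note that~\eqref{eq:product.comp.g} gives $g\bigl(\hat P(\hat P(X,Y),Z),W\bigr)=g\bigl(\hat P(X,Y),\hat P(Z,W)\bigr)=:\mathfrak P(X,Y,Z,W)$ and $g\bigl(\hat P(X,\hat P(Y,Z)),W\bigr)=\mathfrak P(X,W,Y,Z)$, where $\mathfrak P$ a priori lies in $\Gamma(\Sym^2(\Sym^2 T^*M))$. Thus~\eqref{eq:associativity} is equivalent to the total symmetry of $\mathfrak P$, i.e.\ to the vanishing of its algebraic-curvature component, $\Pi_{\mathrm{Riem}}\mathfrak P=0$. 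Since $P=B$, a short manipulation using the pair symmetries of $\mathfrak P$ shows $(\Pi_{\mathrm{Riem}}\mathfrak P)(X,Y,Z,W)=\tfrac12\bigl(g(\hat B(X,Z),\hat B(Y,W))-g(\hat B(X,W),\hat B(Y,Z))\bigr)$, which vanishes by axiom~\ref{item:abundant.Riem} because $M$ is flat ($\kappa=0$). Hence $\star$ is associative. (The trace-free and trace parts of this identity are precisely $\Pi_{\mathrm{Weyl}}\,g(\hat B(\cdot,\cdot),\hat B(\cdot,\cdot))=0$ and~\eqref{eq:ricci-part} at $\kappa=0$; in particular~\eqref{eq:perf.sq} is available.)

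For~\eqref{eq:product.comp.nab}: contracting with an arbitrary $W$ and using~\eqref{eq:product.comp.g} together with $\nabla g=0$, the condition is equivalent to $(\nabla_W P)(X,Y,Z)=\mathfrak P(W,X,Y,Z)=g(\hat P(W,X),\hat P(Y,Z))$. Since associativity is now established, the right-hand side is totally symmetric in $X,Y,Z$, so it suffices to verify the identity on the trace-free part and on the pure-trace part of these three arguments separately. On the pure-trace part, tracing out two of them turns the equation into $\mathscr P=-\tfrac{n+2}{3}\,\nabla^2 t$ (using $\tr\hat P=-\tfrac{n+2}{3}dt$ and $\mathscr P(X,Y)=\tr(\hat P(X,\hat P(Y,\cdot)))$); computing $\mathscr P$ in an orthonormal frame one gets $9\,\mathscr P=\mathscr S+4\,S(\grad t,\cdot,\cdot)+(n+6)\,dt\otimes dt+2|\grad t|^2 g$, so this is the Hessian condition~\ref{item:abundant.DDt} after inserting~\eqref{eq:perf.sq} — exactly the contraction already performed in the proof of Theorem~\ref{thm:product2abundant}, read in reverse. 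On the trace-free part, since $\nabla$ commutes with the trace-free projection ($\nabla g=0$) the left-hand side is $-\tfrac13\nabla_W S$; expanding $\hat P(W,X)=-\tfrac13\hat S(W,X)-\tfrac13\bigl(g(W,X)\grad t+W(t)X+X(t)W\bigr)$ in $g(\hat P(W,X),\hat P(Y,Z))$ and projecting onto $\Sym^3_\circ$ in $X,Y,Z$, the quadratic-in-$S$ term reproduces $S(X,W,\hat S(Y,Z))$ in~\eqref{eq:digamma}, the mixed terms reproduce the terms $S(X,Y,W)\,Z(t)$ and $S(X,Y,Z)\,W(t)$, and the trace removal — together with the trace-free part of~\eqref{eq:ricci-part} at $\kappa=0$, which rewrites $\mathscr S$ through $\hat S(dt)$ and $dt\otimes dt$ — reproduces the $g(X,W)$-term; thus the trace-free part is exactly axiom~\ref{item:abundant.DS}. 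This establishes~\eqref{eq:product.comp.nab} and completes the proof.

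The step I expect to be the real obstacle is this last one: carrying out the $\Sym^3_\circ$-projection of the quadratic expression $g(\hat P(W,\cdot),\hat P(\cdot,\cdot))$ and matching it, term by term, with $\tfrac13\,\Pi_{\Sym^3_\circ}\digamma$. This is a tensorial contraction whose bookkeeping is delicate; the cleanest route is to work in a local orthonormal frame and to use repeatedly the total symmetry of $P$ and the trace identities extracted from axiom~\ref{item:abundant.Riem} at $\kappa=0$. Everything else is either formal (commutativity, \eqref{eq:product.comp.g}, associativity) or a contraction already carried out in Theorem~\ref{thm:product2abundant} (the trace part of \eqref{eq:product.comp.nab}); and as a consistency check, once \eqref{eq:product.comp.nab} holds one recovers~\eqref{eq:trP}, whose antisymmetrisation gives $d\tr\hat P=0$, compatibly with $\tr\hat P$ being exact.
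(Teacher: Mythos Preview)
Your proof is correct and follows the same route as the paper's, namely reading off commutativity and~\eqref{eq:product.comp.g} from the total symmetry of $P$, obtaining associativity from axiom~\ref{item:abundant.Riem} at $\kappa=0$, and verifying~\eqref{eq:product.comp.nab} by the direct computation that inverts the one in Theorem~\ref{thm:product2abundant}. The paper's own proof compresses all of this into the phrase ``by direct computation''; your trace\,/\,trace-free split of~\eqref{eq:product.comp.nab} into~\ref{item:abundant.DDt} and~\ref{item:abundant.DS} is exactly the structure that computation has, and your identification $P=B$ is the key observation that makes associativity immediate from~\ref{item:abundant.Riem}.
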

	\begin{proof}
		Since $P$ is totally symmetric, the commutativity and associativity of~$\star$ are clear. Likewise, \eqref{eq:product.comp.g} is immediately manifest.
		We check~\eqref{eq:product.comp.nab} by direct computation.
	\end{proof}
	
	We have therefore shown, cf.\ Theorems~\ref{thm:product2abundant} and~\ref{thm:abundant2product}, that a commutative and associative product structure $\star$ on a flat space of dimension $n\geq3$, satisfying the conditions~\eqref{eq:product.comp.g} and~\eqref{eq:product.comp.nab}, encodes precisely the data of an abundant structure, and vice versa.
	As abundant structures are a rich source of superintegrable Hamiltonian systems of second order (which we will explain more thoroughly in the next section), so is hence $\star$.

	\section{Discussion}\label{sec:discussion}
	
	In the final section of the paper, we explain how the correspondence obtained in Sections \ref{sec:abundant.structure} and ~\ref{sec:all.arise} establishes a correspondence between Manin-Frobenius manifolds that satisfy \eqref{eq:product.comp.nab}, on the one hand, and abundant second-order (maximally) superintegrable Hamiltonian systems, on the other.
	Moreover, we are going to offer a geometric interpretation of the condition \eqref{eq:product.comp.nab}. We then conclude the section with a discussion of the famous Smorodinski-Winternitz system (generalised to arbitrary dimension) as an example of the correspondence obtained in this paper.
	
	\subsection{Arising superintegrable Hamiltonian systems}\label{sec:abundant system}
	
	Theorems~\ref{thm:product2abundant} and~\ref{thm:abundant2product} state that Manin-Frobenius manifolds subject to \eqref{eq:product.comp.nab} are in 1-to-1 correspondence with abundant structures.
	We shall now consider how this gives rise to a correspondence between Manin-Frobenius manifolds, subject to \eqref{eq:product.comp.nab}, and abundant superintegable systems.
	To this end, we review \cite{KSV2023}, from which we subsequently can conclude that $\star$ is a source of superintegrable Hamiltonian systems.
	Indeed, it was shown in~\cite{KSV2023} that~\ref{item:abundant.Riem} together with~\ref{item:abundant.DS} and~\ref{item:abundant.DDt} guarantee that one can integrate the system of partial differential equations (PDEs)
	\begin{align*}
		\partial_i\partial_jV &= \frac1n\,g_{ij}\,\Delta V
		+\hat S_{ij}^{a}\partial_aV
		+2\,\Pi_{(ij)}\left[ \partial_it\partial_jV-\frac1n\,g_{ij}\,g^{ab}\partial_at\partial_bV \right]
		\\
		\partial_kK_{ij} &= \frac43\,\Pi_{(ij)}\Pi_{[jk]}\left[
		\hat S_{ij}^{a}K_{ak}+g_{ij}K_k^a\partial_at-\partial_kK_{ij}
		\right]
	\end{align*}
	(Einstein's convention is applied) for a smooth function $V\in\smooth(\E)$ and the components $K_{ij}$ of a $(0,2)$-tensor field $K\in\Gamma(\Sym^2(T^*\E)$. These solutions $V$ and $K_{ij}$ depend on $n+2$ and $\frac12n(n+1)$ integration constants, respectively, noting that the PDE system is an overdetermined PDE system of finite type (``closed prolongation system''), see~\cite{KSV2023}.
	By construction, all such solutions satisfy the compatibility condition~\eqref{eq:BD}, cf.~\cite{KSV2023}.
	Recall that $\hat K\in\Gamma(T^*\E\otimes T\E)$ denotes the endomorphism obtained from $K$ by raising one index using $g$.
	
	Now let $V$ and $K$ be specific solutions.
	The Bertrand-Darboux condition~\eqref{eq:BD} is the integrability condition of the PDE system~\eqref{eq:pre-BD}, i.e.~of
	\begin{equation*}
		dW = \hat K(dV)\,,
	\end{equation*}
	and we hence obtain $W\in\smooth(M)$ up to an irrelevant integration constant.
	Observe that a solution $K$ of the above system satisfies
	\[
	\nabla_XK(X,X) = 0\,,
	\]
	for any $X\in\mathfrak X(M)$. This means that~$K$ is a Killing tensor field of rank two.
	Let $(x,p)$ denote canonical Darboux coordinates on $T^*\E$. 
	We define the function $F:T^*\E\to\R$,
	\begin{equation*}
		F(x,p)=K^\sharp(p,p)+W(x)\,,
	\end{equation*}
	where $K^\sharp\in\Gamma(\Sym^2(TM))$, with $\sharp$ denoting is the musical isomorphism induced by $g^{-1}$.
	It follows, see~\cite{KSV2023,Arnold}, that
	\begin{equation}
		X_H(F) = 0\,,
	\end{equation}
	i.e.~that $F$ is a first integral of the Hamiltonian motion associated to the natural Hamiltonian $H$.
	
	We write $F_0:=H$.
	For a maximally superintegrable system, we must now ensure to be able, for a solution $V$, to find $2n-2$ functions $F_1,\dots, F_{2n-2}$, such that $(F_k)_{0\leq k\leq 2n-2}$ is a collection of functionally independent functions $T^*\E\to\R$.
	In this regard, recall that~\eqref{eq:BD} is valid for any combination of solutions $K$ and $V$. It was shown in~\cite{KSV2023} that for a generic choice of $V$, there are enough solutions~$K$ with the desired property.
	More precisely, among all solutions $V$, the subset of solutions for which any subspace of solutions $K$ of dimension at least $2n-1$ yields functionally dependent functions $F$ as above, is confined to an affine subspace of the space of all solutions $V$, and its complement is non-trivial.
	These considerations prove the following statement.
	\begin{cor}\label{cor:superintegrable}
		Let $(\E,g,\star)$ be a simply connected, oriented and flat Riemannian manifold of dimension $n\geq3$ with a commutative and associative product structure $\star$ satisfying \eqref{eq:product.comp.g} and~\eqref{eq:product.comp.nab}.
		Then there are non-constant functions $V:T^*\E\to\R$ such that the natural Hamiltonian
		\begin{equation}
			H(x,p)=g^{-1}_x(p,p)+V(x)
		\end{equation}
		admits $2n-2$ additional functions $F_i:T^*\E\to\R$, $1\leq i\leq 2n-2$, such that $(H,F_1,\dots,F_{2n-2})$ are functionally independent and each $F_i$ is constant along the Hamiltonian flow of~$H$.
	\end{cor}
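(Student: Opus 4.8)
The plan is to deduce this from Theorem~\ref{thm:product2abundant} combined with the integration result of~\cite{KSV2023} recalled in this subsection. First I would invoke Theorem~\ref{thm:product2abundant}: the product $\star$, being commutative and associative on the flat space $(\E,g)$ and satisfying~\eqref{eq:product.comp.g} and~\eqref{eq:product.comp.nab}, induces an abundant structure $(S,t)$ on $(\E,g)$, with $S=-3\mathring P$ and $dt=-\tfrac{3}{n+2}\tr(\hat P)$; here $t$ is well defined up to an irrelevant additive constant because $\tr(\hat P)$ is closed (as shown above) and $\E$ is simply connected. In particular, the structural conditions~\ref{item:abundant.DS}, \ref{item:abundant.DDt} and~\ref{item:abundant.Riem} hold, with $\kappa=0$ by flatness.

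Next I would use the prolongation theorem of~\cite{KSV2023}: since $(S,t)$ satisfies~\ref{item:abundant.DS}--\ref{item:abundant.Riem}, the overdetermined PDE system displayed above for a potential $V\in\smooth(\E)$ and a symmetric $(0,2)$-tensor field $K$ is of finite type, its space of solutions $V$ has dimension $n+2$ and its space of solutions $K$ has dimension $\tfrac12 n(n+1)$, the metric $g$ is among the latter, and every pair of solutions $(V,K)$ automatically satisfies the Bertrand--Darboux condition~\eqref{eq:BD}. Fixing such a $V$ and a solution $K$, I would integrate~\eqref{eq:pre-BD}, i.e.\ $dW=\hat K(dV)$ --- possible up to a constant since $\E$ is simply connected --- to obtain $W\in\smooth(\E)$, and then set $F(x,p)=K^\sharp(p,p)+W(x)$. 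Because $K$ solves its PDE it is a rank-two Killing tensor field (so~\eqref{eq:killing} holds), and together with~\eqref{eq:BD} this yields $X_H(F)=0$, i.e.\ $F$ is constant along the Hamiltonian flow of $H(x,p)=g^{-1}_x(p,p)+V(x)$.

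It then remains to produce $2n-2$ such first integrals which, together with $H=:F_0$, are functionally independent on $T^*\E$. For this I would appeal to the genericity statement recalled above: the set of solutions $V$ for which every subspace of $K$-solutions of dimension at least $2n-1$ yields functionally dependent functions $F$ is confined to a proper affine subspace of the $(n+2)$-dimensional solution space, so its complement is non-empty and in particular contains non-constant $V$. Choosing $V$ in this complement, I would pick a $(2n-1)$-dimensional subspace of $K$-solutions containing $g$ --- which is possible since $\tfrac12 n(n+1)\geq 2n-1$ for $n\geq 3$ --- with basis $g=K_0,K_1,\dots,K_{2n-2}$, and let $F_i$ be the function associated to $K_i$ as above (so $F_0=H$). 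Then $(F_0,\dots,F_{2n-2})$ are functionally independent and each is a first integral of $H$, establishing maximal superintegrability.

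The step I expect to be the main obstacle is the functional-independence argument: showing that for generic $V$ a $(2n-1)$-dimensional family of Killing tensors $K$ really gives $2n-1$ functionally independent functions on the $2n$-dimensional phase space $T^*\E$, and that the exceptional $V$ form an affine subspace with non-empty complement. This is exactly the delicate part taken from~\cite{KSV2023}; it requires tracking how the momentum-quadratic parts $K^\sharp(p,p)$ and the potential parts $W$ constrain one another. The remaining steps are a direct application of Theorem~\ref{thm:product2abundant} together with standard facts about second-order first integrals of natural Hamiltonians.
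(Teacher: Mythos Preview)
Your proposal is correct and follows essentially the same route as the paper: invoke Theorem~\ref{thm:product2abundant} to obtain the abundant structure, then quote the integration and genericity results of~\cite{KSV2023} to produce the potentials $V$, the Killing tensors $K$, and the required $2n-1$ functionally independent integrals. Your write-up is in fact slightly more explicit than the paper's (you spell out choosing a $(2n-1)$-dimensional subspace of $K$-solutions containing $g$ and verify $\tfrac12 n(n+1)\geq 2n-1$), but the argument and its reliance on~\cite{KSV2023} for the delicate functional-independence step are identical.
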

	
	This confirms that the products $\star$ are (rich) sources of (maximally) superintegrable Hamiltonian systems of second order. The converse is also true: A given second-order (maximally) superintegrable Hamiltonian system that is abundant defines an abundant structure and hence a commutative and associative product structure $\star$ that satisfies \eqref{eq:product.comp.g} and~\eqref{eq:product.comp.nab}, cf.\ \cite{KSV2023}.
	
	\subsection{Interpretation of the condition \eqref{eq:product.comp.nab}}\label{sec:interpretation}
	
	We now recall Remark~\ref{rmk:hessian}, where we stated that a Manin-Frobenius manifold subject to \eqref{eq:product.comp.nab} is endowed with a Hessian structure via the connection $\nabla^{\hat P}=\nabla+\hat P$. This means that the metric can be written as
	\begin{equation}\label{eq:Hessian.potential}
		g = \nabla^{\hat P}d\phi
	\end{equation}
	for a suitable function $\phi$. A direct computation, which can be found in \cite{AV2024}, shows that one may choose $\phi=\Phi$ if \eqref{eq:product.comp.nab} holds. (Note that \eqref{eq:product.potentiality} and~\eqref{eq:Hessian.potential} determine $\Phi$ and $\phi$, respectively, only up to a certain gauge freedom.) Indeed, it follows from the theory of Hessian structures that $-2P=\nabla^{\hat P}g=(\nabla^{\hat P})^3\phi$, cf.\ \cite{Shima}.
	We then compute, analogously to \cite{AV2024},
	\begin{align*}
		-2P(X,Y,Z)
		&= (\nabla^{\hat P})^3\phi(X,Y,Z)
%		\\
%		&= \nabla^{\hat P}_Z(\nabla^{\hat P}d\phi)(X,Y)
		\\
		&= Z(\nabla d\phi(X,Y)-\hat P(X,Y)(d\phi))
		 -g(\nabla^{\hat P}_ZX,Y)-g(X,\nabla^{\hat P}_ZY)
		\\
		&= \nabla^3\phi(X,Y,Z) -3P(X,Y,Z)
		\\ &\qquad\qquad -(\nabla_Z\hat P)(X,Y)(d\phi)+\hat P(\hat P(X,Y),Z)(d\phi)
		\\
		&= \nabla^3\phi(X,Y,Z) -3P(X,Y,Z).
	\end{align*}
	This yields $P=\nabla^3\phi$.
	At the same time, we still have \eqref{eq:product.potentiality}, i.e.\ $P=\nabla^3\Phi$. We may thus choose $\phi=\Phi$, as claimed.

	\subsection{The Smorodinski-Winternitz system}\label{sec:example}
	
	We conclude the paper with an explicit example, namely the famous Smorodinski-Winternitz I system from Hamiltonian mechanics. For clarity we confine ourselves to the three-dimensional case in the following paragraph. The reader will find it easy to extend this special case to all dimensions $n\geq3$, obtaining the example stated at the end of this section.

	Consider the three-dimensional Smorodinski-Winternitz~I system, e.g.~\cite{KKPM2001,Evans1990,Evans1990_winternitz,GL2023}.
	Let $(\E,g)=(\R_+^3,dx^2+dy^2+dz^2)$. The structure tensor $S$ of the three-dimensional Smorodinski-Winternitz~I system is then given by
	\begin{equation*}
		-\frac13\,(S+3\Pi_{\Sym^3}g\otimes dt)=-\frac1x\,dx^3-\frac1y\,dy^3-\frac1z\,dz^3\,,
	\end{equation*}
	cf.~\cite{KSV2023,AV2024}. Note that this equation defines $t$ up to the addition of an irrelevant constant, cf.~\cite{KSV2023}.
	Due to Theorem~\ref{thm:abundant2product}, we can now define the product structure~$\star$ by $X\star Y=\hat P(X,Y)$, for any $X,Y\in\mathfrak X(\E)$, introducing
	\begin{equation*}
		\hat P(X,Y) = -\frac13\,S(X,Y,\cdot)^\sharp-(\Pi_{\Sym^3}g\otimes dt)(X,Y)^\sharp\ \in\mathfrak X(M)\,,
	\end{equation*}
	where $\sharp$ is the musical isomorphism induced by $g^{-1}$.
	This makes $(\E,g,\star)$ a Manin-Frobenius manifold. Indeed, $(\E,g,\star)$ is a pre-Frobenius manifold since~$\star$ is commutative and satisfies~\eqref{eq:product.comp.g}. It is also associative, invoking a reasoning similar to the one in Remark~\ref{rmk:hessian}, using that $\nabla+\hat P$ is flat. The potentiality property holds by Theorem~\ref{thm:abundant2product} and in light of Remark~\ref{rmk:potentiality}. Hence $(\E,g,\star)$ is indeed a Manin-Frobenius manifold.

	\begin{ex*}
		The ($n$-dimensional) Smorodinski-Winternitz system corresponds to the Manin-Frobenius manifold $(\mathbb R_+^n,g_\mathrm{std},\star)$ where $g_\mathrm{std}$ is the standard Euclidean metric,
		and where the product structure $\star:TM\times TM\to TM$ is given by
		\begin{equation*}
			\partial_i\star\partial_j = \frac{1}{x^i}\delta_{ij}\partial_j,
		\end{equation*}
		(Einstein's convention is not used)
		where $(x^i)$ are the canonical coordinates on $\mathbb R^n_+$, $(\partial_i)$ the corresponding basis of $TM$ and where $\delta_{ij}$ is the Kronecker-Delta.
		This product has the unit vector field $u = \sum_{i=1}^n x^i\partial_i$, which satisfies $\mathcal L_ug=2g$ and $\mathcal L_u\hat P=0$, where $\mathcal L$ denotes the Lie derivative.
	\end{ex*}

	\section*{Acknowledgements}
	The author thanks Vicente Cortés as well as John Armstrong, Jonathan Kress, Vladimir Matveev and Konrad Sch\"obel for discussions.
	This research was funded by the German Research Foundation (\emph{Deutsche Forschungsgemeinschaft}, DFG) through the Research Grant 540196982.
	The author also acknowledges support by the \emph{Forschungsfonds} of the Department of Mathematics at the University of Hamburg.
	
	\printbibliography
	
\end{document}